\numberwithin{equation}{section}
\newtheorem{theorem}{Theorem}
\newtheorem{lemma}[theorem]{Lemma}
\newtheorem{corollary}[theorem]{Corollary}
\newtheorem{remark}[theorem]{Remark}
\def\P{\mathbf{P}}
\def\Q{\mathbf{Q}}
\begin{document}

\title{Random index of codivisibility}
 \author{Jos\'{e} L. Fern\'{a}ndez and  Pablo Fern\'{a}ndez}

 \date{\today}

 \renewcommand{\thefootnote}{\fnsymbol{footnote}}
 \footnotetext{\noindent\emph{2010 Mathematics Subject Classification}: 11N37, 60F05.
 }
\footnotetext{\noindent\emph{Keywords}: Coprimality, codivisibility, asymptotic normality, probabilistic number theory, large deviations.
}
\renewcommand{\thefootnote}{\arabic{footnote}}

 \maketitle

 \begin{abstract}
{The index of codivisibility  of a set of integers is the size of its largest  subset with a common prime divisor. For large random samples of integers, the index of codivisibility is approximately normal.}\end{abstract}

\section{Coprimality of an $r$-tuple of integers}

For $r$-tuples of integers, there are two ``natural'' notions of \textit{coprimality}: the integers $a_1,\dots, a_r$ are \textit{mutually coprime} if $\gcd(a_1,\dots,a_r)=1$, and they are \textit{pairwise coprime} if $\gcd(a_i, a_j)=1$ for each $i\ne j$, $i,j=1,\dots,r$; which we abbreviate, respectively,   as $(a_1,\dots,a_r)\in \text{C}$ and $(a_1,\dots, a_r)\in \text{PC}$.

\smallskip
In this note, we are interested in the random behavior of these (and some other intermediate) notions of coprimality. For any given integer $n \ge 2$, let us denote by $X^{(n)}_1,
X^{(n)}_2, \ldots$ a sequence of independent random variables which are
uniformly distributed in $\{1, 2, \ldots, n\}$  and are defined in a
certain given probability space endowed with a probability $\P$.

\smallskip
Fix $r\ge 2$. Concerning mutual coprimality, we have
\begin{equation}
\label{eq:proportion of C}
\lim_{n \to \infty}\P\big(\big(X^{(n)}_1, \ldots, X^{(n)}_r\big) \in\text{C}\big)
=\frac{1}{\zeta(r)}\, ,
\end{equation}
that is, the probability of an $r$-tuple of integers being mutually coprime is asymptotically~$1/\zeta(r)$. The case $r=2$ is a classical result of Dirichlet, (see, for instance, Theorem 332 in~\cite{HW}), while the extension to $r>2$ can be traced back all the way back to E.~Ces\`{a}ro (\cite{Ce3}, page~293); see also, for instance, \cite{Ch}, \cite{HS} and \cite{Ny}.

\smallskip
For pairwise coprimality, we have
\begin{equation}\label{eq:proportion of PC}
\lim_{n \to \infty}\P\big(\big(X^{(n)}_1, \ldots, X^{(n)}_r\big) \in \text{PC}\big)
=\prod_{p} \Big(\Big(1-\frac{1}{p}\Big)^{r}+\frac{r}{p}\Big(1-\frac{1}{p}\Big)^{r-1}\Big):=T_r.
\end{equation}
(In this paper, $\prod_{p}$ or $\max_p$ means product or maximum running over all primes~$p$). This result was advanced by M. Schroeder, \cite{Schroeder}, and proved by L.~Toth, \cite{To2004}, and also by J.~Cai and E.~Bach, \cite{CB2001}.

For $r=2$, mutual and pairwise coprimality coincide, and $T_2={1}/{\zeta(2)}$. For $r \to \infty$, the probability of mutual coprimality tends to 1, while that of pairwise coprimality tends to 0; a mere reflection of the fact that pairwise coprimality is a more demanding notion that mutual coprimality.

\smallskip
Observe that pairwise coprimality of an $r$-tuple $(a_1,\dots, a_r)$ of integers means that, for any prime $p$, $p$~divides at most one the coordinates $a_j$, while mutual coprimality means that any prime~$p$ divides at most $r-1$ of them. 
It is enlightening to rewrite the limits~\eqref{eq:proportion of C} and \eqref{eq:proportion of PC} as
\begin{align}
\label{eq:proportion of Cbis}
\lim_{n \to \infty}&\P\big(\big(X^{(n)}_1, \ldots, X^{(n)}_r\big) \in\text{C}\big)=\prod_p \Big(1-\frac{1}{p^r}\Big)=\prod_p \P(\textsc{bin}(r,1/p)\le r-1),
\\
\label{eq:proportion of PCbis}
\lim_{n \to \infty}&\P\big(\big(X^{(n)}_1, \ldots, X^{(n)}_r\big) \in \text{PC}\big)
=\prod_p \P(\textsc{bin}(r,1/p)\le 1),
\end{align}
where $\textsc{bin}(r,1/p)$ denotes a binomial variable with number of repetitions $r$ and probability of success, \textit{sic}, $1/p$.

\smallskip
It is natural to consider the following notion of coprimality intermediate between mutual and pairwise coprimality: for fixed $2\le k\le r$, we will say that the integers $(a_1,\dots, a_r)$ are \textit{$k$-wise relatively prime} (or simply $k$-coprime, or $k\text{C}$) if \textit{any}~$k$ of them are relatively prime. 
Or alternatively, if each prime $p$ divides at most $k-1$ of them. The case $k=2$ is pairwise coprimality, while $k=r$ corresponds to mutual coprimality.


\smallskip
Recently, J. Hu (see Corollary 2 in \cite{Hu2}) has proved that
\begin{equation}\label{eq:proportion of kC}
\lim_{n \to \infty}\P\big(\big(X^{(n)}_1, \ldots, X^{(n)}_r\big) \in k\text{C}\big)
=\prod_{p} \P(\textsc{bin}(r,1/p)\le k-1)\, ;
\end{equation}
thus effectively  interpolating between \eqref{eq:proportion of Cbis} and \eqref{eq:proportion of PCbis}. See \cite{FF} for an alternative proof and some further developments.

Notice how \eqref{eq:proportion of Cbis},  \eqref{eq:proportion of PCbis}, and more generally \eqref{eq:proportion of kC} are manifestations of the asymptotic total independence of divisibility by primes.
\subsection{Index of codivisibility}

For each prime $p$, we denote the indicator of divisibility by $p$ by $i_p$, that is, for any  positive integer $a$, we write $i_p(a)=1$, if $p\mid a$, and  $i_p(a)=0$, if $p\nmid a$. For a $r$-tuple of integers $(a_1,  \ldots, a_r)$, we write $i_p(a_1, \ldots, a_r)=\sum_{j=1}^r i_p(a_j)$, which registers how many of those $a_j$  are divisible by $p$. Finally, the \textit{index of codivisibility},  $I_r(a_1, \ldots, a_r)$, of the $r$-tuple $(a_1,  \ldots, a_r)$ is given by
$$
I_r(a_1, \ldots, a_r)=\max_{p} \big[i_p(a_1, \ldots, a_r)\big]\, .
$$

Notice that $0 \le I_r \le r$, and that $I_r(a_1, \ldots, a_r)\le k$ means that $(a_1, \ldots, a_r)$ is $(k+1)C$.  Actually, $I_r(a_1, \ldots, a_r)=r$ says that $(a_1, \ldots, a_r)$ is not mutually coprime, while $I_r(a_1, \ldots, a_r)=1$ simply signifies that $(a_1, \ldots, a_r)$ is pairwise coprime. Observe that $I_r(a_1, \ldots, a_r)=0$ means that no prime divides any of the $a$, so that $(a_1,\ldots, a_r)=(1,1,\ldots, 1)$.

We introduce now the random variable $W^{(n)}_r$ given by
$$
W^{(n)}_r=I_r(X^{(n)}_1, \ldots, X^{(n)}_r)\,,
$$
which registers the index of codivisibility of a random sample of $r$ integers not exceeding $n$.

Observe that $$
\P(W^{(n)}_r=0)=\Big(\frac{1}{n}\Big)^r\, , \qquad
\P(W^{(n)}_r=r)=1- \Big(\frac{1}{n}\Big)^r \sum_{d=1}^n \mu(d) \Big\lfloor\frac{n}{d}\Big\rfloor^r,
$$
and that, for each $0 \le k \le r$, we may rewrite \eqref{eq:proportion of kC} as
$$
\lim_{n\to \infty}\P(W^{(n)}_r\le k)=\prod_{p} \P(\textsc{bin}(r,1/p)\le k)\,.
$$

\smallskip

For any integer $r\ge 2$ fixed, consider the distribution function
\begin{equation}
\label{eq:def of F_t(r)}
F_r(t)=\prod_{p} \P(\textsc{bin}(r,1/p)\le t).
\end{equation}
Observe that  $F_r(t)=0$ if $t< 1$, and $F_r(t)=1$ if $t\ge r$.

We denote by $W_r$ a random variable with distribution function $F_r$. The variable $W_r$ takes values on $\{0,1, 2, \ldots, r\}$, and $\P(W_r=0)=0$, $\P(W_r=1)=1/T_r$ and $\P(W_r=r)=1-1/\zeta(r)$. Observe that $W_r$ (informally) registers the index of codivisibility of a random $r$-tuple of integers (with no bound $n$ on the integers).

\section{Asymptotic normality of the random index of codivisibility}

As $r \to \infty$, the distribution of the random index of codivisibility $W_r$ is asymptotically normal. More precisely we shall prove that
\begin{theorem}
\label{theor:uniform_aproximation_binomial}
There are absolute constants $A, B>0$ so that for every $t \in \mathbb{R}$
\begin{equation}
\label{eq:uniform_aproximation_binomial}
0 \le \P\big(\textsc{bin}(r,{1}/{2})\le t\big)-\P\big(W_r \le t\big)\le A e^{-Br}\,.
\end{equation}
\end{theorem}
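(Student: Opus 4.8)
The plan is to compare $W_r$ with the single factor of $F_r$ coming from the prime $2$. By the definition \eqref{eq:def of F_t(r)} of $F_r$,
\[
\P(W_r\le t)=F_r(t)=\P\big(\textsc{bin}(r,1/2)\le t\big)\cdot\prod_{p\ge 3}\P\big(\textsc{bin}(r,1/p)\le t\big),
\]
and since each factor with $p\ge 3$ lies in $[0,1]$, the left-hand inequality in \eqref{eq:uniform_aproximation_binomial} is immediate. For the right-hand inequality, the same factorization gives
\[
\P\big(\textsc{bin}(r,1/2)\le t\big)-\P(W_r\le t)=\P\big(\textsc{bin}(r,1/2)\le t\big)\Big(1-\prod_{p\ge 3}\P\big(\textsc{bin}(r,1/p)\le t\big)\Big),
\]
and, using the elementary bound $1-\prod_{p\ge 3}(1-a_p)\le\sum_{p\ge 3}a_p$ with $a_p=\P(\textsc{bin}(r,1/p)>t)$, we obtain
\[
0\le\P\big(\textsc{bin}(r,1/2)\le t\big)-\P(W_r\le t)\le\min\Big(\P\big(\textsc{bin}(r,1/2)\le t\big),\ \sum_{p\ge 3}\P\big(\textsc{bin}(r,1/p)>t\big)\Big).
\]
It then suffices to show that this minimum is at most $Ae^{-Br}$ for every real $t$; we may assume $t<r$, since otherwise both probabilities equal $1$. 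The two arguments of the minimum will be used on the complementary ranges $t\le\tfrac25 r$ and $\tfrac25 r\le t<r$.

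On the range $t\le\tfrac25 r$ we keep the first argument: $\textsc{bin}(r,1/2)$ is a sum of $r$ independent $\{0,1\}$-valued variables with mean $r/2$, so Hoeffding's inequality gives $\P(\textsc{bin}(r,1/2)\le t)\le e^{-2(r/2-t)^2/r}\le e^{-2(r/10)^2/r}=e^{-r/50}$. On the range $\tfrac25 r\le t<r$ we bound $\sum_{p\ge 3}\P(\textsc{bin}(r,1/p)>t)$, and here \emph{two} tail estimates are needed: a Hoeffding bound alone gives, for each $p\ge 3$, a term of order $e^{-2(t-r/p)^2/r}\le e^{-2(r/15)^2/r}=e^{-2r/225}$ (using $t-r/p\ge\tfrac25 r-\tfrac13 r=\tfrac1{15}r$), which is exponentially small but does not decrease with $p$, so the sum over all primes would diverge. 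Instead we treat the two ``small'' primes $p=3$ and $p=5$ by Hoeffding — getting $\P(\textsc{bin}(r,1/3)>t)\le e^{-2r/225}$ and $\P(\textsc{bin}(r,1/5)>t)\le e^{-2r/25}$ — and the ``large'' primes $p\ge 7$ by the Chernoff tail bound $\P(\textsc{bin}(r,q)\ge t)\le(erq/t)^t$, valid for $t\ge rq$. With $q=1/p$ and $t\ge\tfrac25 r$ we have $er/(pt)\le 5e/(2p)\le 5e/14<1$ for $p\ge 7$, and since $(er/(pt))^t$ is nonincreasing in $t$ for $t\ge r/p$ this yields $\P(\textsc{bin}(r,1/p)>t)\le(5e/(2p))^{2r/5}$; summing the resulting geometrically decaying series over $p\ge 7$ gives a bound $C\,(5e/14)^{2r/5}=C\,e^{-\beta r}$ with $\beta=\tfrac25\log(14/(5e))>0$. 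Adding the three contributions, $\sum_{p\ge 3}\P(\textsc{bin}(r,1/p)>t)\le A_1e^{-B_1 r}$ on this range for suitable absolute $A_1\ge 1$ and $0<B_1\le\tfrac1{50}$. Combining the two ranges, the minimum is at most $\max(e^{-r/50},A_1e^{-B_1 r})=A_1e^{-B_1 r}$, which proves \eqref{eq:uniform_aproximation_binomial} with $A=A_1$, $B=B_1$.

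The only delicate point — and the main obstacle — is precisely the passage from finitely many to infinitely many primes on the second range: no single elementary tail bound is simultaneously exponentially small in $r$ and summable over $p$, so one must split the primes and exploit two features of the relevant values of $t$ at once, namely that $t$ lies strictly above $r/3$ (which makes the first couple of binomial tails exponentially small) and that the threshold $\tfrac25$ is large enough that the crude Chernoff bound $(er/(pt))^t$ already has base strictly below $1$ for every $p\ge 7$ (which makes the remaining tail sum converge geometrically). Any threshold $\theta$ with $e/7<\theta<1/2$ serves equally well; the choice $\theta=2/5$ is merely convenient.
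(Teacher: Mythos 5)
Your proof is correct and follows essentially the same route as the paper's: the same factorization identity, a case split on $t$ at a threshold in $(\tfrac13 r,\tfrac12 r)$ (you use $\tfrac25 r$, the paper $\tfrac38 r$), Hoeffding for the $\textsc{bin}(r,1/2)$ lower tail and for the finitely many small primes, and a multiplicative tail bound decaying in both $r$ and $p$ to control the infinitely many large primes (you use the Chernoff bound $(erq/t)^t$ from $p\ge 7$; the paper uses Bennett's inequality from $p\ge 64$ and sums via $\zeta(3r/16)$). The differences are only in the choice of constants and the named form of the large-prime estimate, and you correctly isolate the one genuine difficulty, namely making the tail sum over all primes converge while staying exponentially small in $r$.
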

Thus, informally,  the index of codivisibility of a sequence of length $r$ ($r$~large) of  random numbers follows  (approximately) a binomial distribution  with $r$ repetitions and probability of success ${1}/{2}$. The Central Limit Theorem gives as an immediate consequence that:
\begin{corollary}\label{cor:normality of maximum coprimality}
$$
\frac{W_r-r/2}{\sqrt{r}/2} \overset{\text{d}}{\longrightarrow} \mathcal{N}(0,1)\,, \quad \text{as} \ \ r \to \infty\, .
$$
\end{corollary}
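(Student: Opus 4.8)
The plan is to isolate the factor in the product defining $F_r$ that comes from the prime $p=2$. Put $G_r(t)=\prod_{p\ge 3}\P(\textsc{bin}(r,1/p)\le t)\in[0,1]$; since the product in \eqref{eq:def of F_t(r)} runs over \emph{all} primes, it factors as
\[
\P(W_r\le t)=F_r(t)=\P\big(\textsc{bin}(r,1/2)\le t\big)\cdot G_r(t),
\]
and therefore
\[
\P\big(\textsc{bin}(r,1/2)\le t\big)-\P(W_r\le t)=\P\big(\textsc{bin}(r,1/2)\le t\big)\,\big(1-G_r(t)\big).
\]
Both factors on the right lie in $[0,1]$, which is already the left-hand inequality in \eqref{eq:uniform_aproximation_binomial}, and the difference is at most each factor separately. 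Using in addition the elementary bound $1-\prod_i x_i\le\sum_i(1-x_i)$ for $x_i\in[0,1]$, I would then proceed from the two estimates
\[
\P\big(\textsc{bin}(r,1/2)\le t\big)-\P(W_r\le t)\ \le\ \P\big(\textsc{bin}(r,1/2)\le t\big),\qquad
\P\big(\textsc{bin}(r,1/2)\le t\big)-\P(W_r\le t)\ \le\ \sum_{p\ge 3}\P\big(\textsc{bin}(r,1/p)>t\big).
\]

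Next I would split according to the size of $t$ at a threshold $\tau=\alpha r$, where $\alpha$ is any fixed number with $1/3<\alpha<1/2$ (for concreteness $\alpha=2/5$). If $t<\tau$, the first estimate gives $\P(\textsc{bin}(r,1/2)\le t)\le\P(\textsc{bin}(r,1/2)\le\alpha r)$, a deviation of a fair binomial strictly below its mean $r/2$, bounded by Hoeffding's inequality by $e^{-2(1/2-\alpha)^2 r}$. If $t\ge\tau$, then for \emph{every} prime $p\ge 3$ one has $t\ge\alpha r>r/p=\E[\textsc{bin}(r,1/p)]$, so each term $\P(\textsc{bin}(r,1/p)>t)\le\P(\textsc{bin}(r,1/p)\ge\alpha r)$ is an \emph{upper} deviation, and the Chernoff bound gives $\P(\textsc{bin}(r,1/p)\ge\alpha r)\le e^{-r\,D(\alpha\,\|\,1/p)}$, where $D(\alpha\,\|\,q)=\alpha\ln(\alpha/q)+(1-\alpha)\ln\!\big((1-\alpha)/(1-q)\big)$ is the relative entropy. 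Since $q\mapsto D(\alpha\,\|\,q)$ is decreasing on $(0,\alpha)$, one has $D(\alpha\,\|\,1/p)\ge D(\alpha\,\|\,1/3)>0$ for all $p\ge 3$ --- and the strict positivity of this constant is exactly where $\alpha>1/3$ enters --- while $D(\alpha\,\|\,1/p)\ge\tfrac15\ln p$ once $p$ is larger than some absolute constant $p_0$.

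The heart of the argument, and the step I expect to be the main obstacle, is to bound $\sum_{p\ge 3}e^{-r\,D(\alpha\,\|\,1/p)}$ by something of the form $Ae^{-Br}$ uniformly in $r$. I would split it into the finitely many primes $3\le p\le p_0$, whose total contribution is at most a constant times $e^{-r\,D(\alpha\,\|\,1/3)}$, and the tail $\sum_{p>p_0}e^{-r\,D(\alpha\,\|\,1/p)}\le\sum_{n>p_0}n^{-r/5}$, which for $r$ large is $O\big(p_0^{-r/5}\big)$ and hence also exponentially small; the delicate point is merely that the governing exponent $D(\alpha\,\|\,1/3)$, contributed by the prime $3$, is a tiny but strictly positive absolute constant. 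Combining the two cases yields $0\le\P(\textsc{bin}(r,1/2)\le t)-\P(W_r\le t)\le Ae^{-Br}$ for all $t\in\mathbb R$ and all $r$ past some absolute $r_0$; the finitely many values $r\le r_0$ are absorbed by enlarging $A$, since for them the difference is trivially at most $1$.

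Finally, Corollary~\ref{cor:normality of maximum coprimality} is immediate: evaluating \eqref{eq:uniform_aproximation_binomial} at $t=r/2+x\sqrt r/2$ replaces $\P(W_r\le t)$ by $\P(\textsc{bin}(r,1/2)\le t)$ up to an error $Ae^{-Br}\to 0$, and $\P\big(\textsc{bin}(r,1/2)\le r/2+x\sqrt r/2\big)$ converges, for each real $x$, to the standard normal distribution function by the de Moivre--Laplace theorem; the uniformity in $t$ of the error term in \eqref{eq:uniform_aproximation_binomial} is precisely what legitimizes this substitution.
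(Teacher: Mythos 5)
Your proposal is correct and follows essentially the same route as the paper: the corollary is obtained by evaluating the uniform bound \eqref{eq:uniform_aproximation_binomial} at $t=r/2+x\sqrt{r}/2$ and invoking de Moivre--Laplace, and your embedded re-derivation of Theorem \ref{theor:uniform_aproximation_binomial} mirrors the paper's argument (split at a threshold $\alpha r$ with $1/3<\alpha<1/2$, Hoeffding for the fair binomial below its mean, and upper-tail Chernoff/Bennett-type bounds summed over primes $p\ge 3$, with large primes controlled by a zeta-like tail). The only differences are cosmetic: the paper fixes $\alpha=3/8$ and uses Hoeffding and Bennett separately where you use the relative-entropy form of Chernoff.
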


Observe that, since
$$
\P\big(W_r\le t\big)=\P\big(\textsc{bin}(r,1/2)\le t\big)\, \prod_{p\ge 3} \P\big(\textsc{bin}(r,1/p)\le t\big)\, ,
$$
we have, for every $t \in \mathbb{R}$,
\begin{equation*}
\P\big(\textsc{bin}(r,1/2)\le t\big)-\P\big(W_r\le t\big)=\P\big(\textsc{bin}(r,1/2)\le t\big)\Big(1-\prod_{p\ge 3} \P\big(\textsc{bin}(r,1/p)\le t\big)\Big)\, .
\end{equation*}
From this identity, the left hand side inequality of \eqref{eq:uniform_aproximation_binomial} follows; and it also follows that
\begin{equation}\label{eq:bound on difference of distributions}
\P\big(\textsc{bin}(r,1/2)\le t\big)-\P\big(W_r\le t\big)\le1-\prod_{p\ge 3} \P\big(\textsc{bin}(r,1/p)\le t\big)\, .
\end{equation}

We collect in the following two lemmas  some bounds on tails of binomial distributions that we need.
\begin{lemma}\label{lemma:bound Hoeffding on tails of binomials}
For any integer $N \ge 2$ and any probability $q \le {1}/{3}$,
$$
\P\big(\textsc{bin}(N, q) \le \tfrac{3}{8} N\big) \ge 1-e^{-N/300}\, .
$$
\end{lemma}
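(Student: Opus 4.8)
The plan is to deduce this from Hoeffding's inequality for sums of independent bounded variables. Write $S=\textsc{bin}(N,q)=\sum_{j=1}^{N}Y_j$, where the $Y_j$ are independent Bernoulli variables of parameter $q$, so that $\E[S]=Nq\le N/3$ since $q\le 1/3$. Because $\{S>\tfrac38 N\}\subseteq\{S\ge\tfrac38 N\}$, we have $\P\big(S\le\tfrac38 N\big)=1-\P\big(S>\tfrac38 N\big)\ge 1-\P\big(S\ge\tfrac38 N\big)$, so it is enough to bound $\P\big(S\ge\tfrac38 N\big)$ from above by $e^{-N/300}$.

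First I would note that reaching the level $\tfrac38 N$ forces a deviation above the mean of at least
$$
\tfrac38 N-Nq=N\big(\tfrac38-q\big)\ge N\big(\tfrac38-\tfrac13\big)=\tfrac{N}{24}\, ,
$$
using only $q\le 1/3$. Hoeffding's inequality then gives
$$
\P\big(S\ge Nq+\tfrac{N}{24}\big)\le \exp\!\Big(-\frac{2(N/24)^2}{N}\Big)=e^{-N/288}\, ,
$$
and since $288<300$ we conclude $\P\big(S\ge\tfrac38 N\big)\le e^{-N/288}\le e^{-N/300}$, which is the desired estimate. (The hypothesis $N\ge 2$ is not really used; Hoeffding's bound holds for every $N\ge 1$.)

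I expect essentially no obstacle here: the only points needing care are the reduction to the worst case $q=1/3$ — handled simply by the bound $Nq\le N/3$ inside the deviation, or equivalently by monotonicity of $\P(S\ge\tfrac38 N)$ in $q$ — and the elementary check that the constant $1/288$ coming out of Hoeffding exceeds $1/300$. As an alternative one could run the exponential-moment Chernoff argument directly, $\P(S\ge aN)\le e^{-\lambda aN}(1-q+qe^{\lambda})^{N}$ with $a=3/8$ and, say, $\lambda=\ln(6/5)$, and verify that the resulting exponent $D(3/8\,\|\,1/3)$ is larger than $1/300$; this yields a marginally better constant at the cost of a short computation, so for brevity I would present the Hoeffding route.
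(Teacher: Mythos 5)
Your proof is correct and follows essentially the same route as the paper: reduce to bounding the upper tail, note the deviation is at least $N(\tfrac38-\tfrac13)=\tfrac{N}{24}$, apply Hoeffding to get $e^{-2N/24^2}=e^{-N/288}$, and observe $1/288>1/300$. No issues.
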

\begin{proof}
We bound
\begin{align}
\nonumber
\P\big(\textsc{bin}(N,q)\ge \tfrac{3}{8}N\big)&=\P\big(\textsc{bin}(N,q)-Nq\ge (\tfrac{3}{8}-q)N\big)\\&\le\P\big(\textsc{bin}(N,q)-Nq\ge \tfrac{N}{24}\big)\le e^{-2 \tfrac{N}{24^2}}\,.
\end{align}
The last inequality follows from the the standard Hoeffding's inequality (see, for instance, Theorem 2.1 in \cite{DL} or Theorem A.1.4 in \cite{AS}).
\end{proof}

\smallskip

\begin{lemma}\label{lemma:bound Bennet on tails of binomials}
For any integer $N \ge 2$ and any probability $q \le {1}/{64}$,
$$
\P\big(\textsc{bin}(N, q) \le \tfrac{3}{8} N\big) \ge 1-q^{3N/16}\, .
$$
\end{lemma}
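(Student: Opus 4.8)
The plan is to control the complementary event $\{\textsc{bin}(N,q) > \tfrac38 N\}$ by a crude union bound over the $m$-element subsets of the $N$ trials, and then to check that the resulting estimate is dominated by $q^{3N/16}$ using only that $q$ is small.

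Set $m=\lceil \tfrac38 N\rceil$, so that $\{\textsc{bin}(N,q)\le \tfrac38 N\}^{\,c}\subseteq\{\textsc{bin}(N,q)\ge m\}$ (since $\textsc{bin}(N,q)$ is integer-valued). If a realisation has at least $m$ successes among the $N$ independent trials, then some $m$-element subset $S$ of the trials consists entirely of successes; hence
$$
\P\big(\textsc{bin}(N,q)\ge m\big)\le \sum_{|S|=m}\P(\text{all trials in }S\text{ succeed})=\binom{N}{m}\,q^{m}\le 2^{N}q^{3N/8},
$$
where we used $\binom{N}{m}\le 2^{N}$ and $m\ge \tfrac38 N$ together with $q<1$.

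It then suffices to verify that $2^{N}q^{3N/8}\le q^{3N/16}$, i.e. that $\big(2\,q^{3/16}\big)^{N}\le 1$, i.e. that $2\,q^{3/16}\le 1$. This is where the hypothesis $q\le \tfrac1{64}$ enters: since $\tfrac1{64}=2^{-6}$, one has $2\,q^{3/16}\le 2\cdot 2^{-18/16}=2^{-1/8}<1$. Combining the two displays gives $\P\big(\textsc{bin}(N,q)\le \tfrac38 N\big)\ge 1-q^{3N/16}$, as claimed.

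There is no serious obstacle here; the only points needing a little care are the passage from the event $\{\textsc{bin}(N,q)\le \tfrac38 N\}$ to $\{\textsc{bin}(N,q)\ge m\}$ through the ceiling $m=\lceil\tfrac38 N\rceil$, and the observation that the very lossy bound $\binom{N}{m}\le 2^{N}$ is still affordable — precisely because the smallness assumption $q\le \tfrac1{64}$ is chosen so that $2\,q^{3/16}<1$, leaving room to spare. (Alternatively, a Chernoff bound on the upper tail of $\textsc{bin}(N,q)$ gives the same conclusion, but the union-bound argument makes the role of the constants most transparent, which is presumably why the threshold is $\tfrac1{64}$ rather than the $\tfrac13$ of Lemma~\ref{lemma:bound Hoeffding on tails of binomials}.)
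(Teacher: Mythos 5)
Your proof is correct, but it takes a genuinely different route from the paper. The paper invokes Bennett's inequality for the upper tail of $\textsc{bin}(N,q)$ and then massages the exponent $Nq[(1+s)\ln(1+s)-s]$ to arrive at $\P(\textsc{bin}(N,q)\ge \tfrac38 N)\le (8q)^{3N/8}\le q^{3N/16}$, the last step being exactly where $q\le 1/64$ is used. You instead use the elementary first-moment/union bound $\P(\textsc{bin}(N,q)\ge m)\le \binom{N}{m}q^m$ with $m=\lceil \tfrac38 N\rceil$, crush $\binom{N}{m}$ by $2^N$, and absorb the factor $2^N$ into $q^{3N/16}$ via $2q^{3/16}\le 2\cdot 2^{-18/16}=2^{-1/8}<1$. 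All the steps check out (the inclusion of events through the ceiling, the union bound over $m$-subsets, and the arithmetic with exponents), and your version is self-contained where the paper's relies on a named concentration inequality; it is essentially the standard derivation of a Chernoff-type bound in the regime where the mean $Nq$ is far below the threshold, so nothing is lost by the crudeness of $\binom{N}{m}\le 2^N$. One mild remark: your argument actually only needs $2q^{3/16}\le 1$, i.e.\ $q\le 2^{-16/3}\approx 1/40$, so it proves slightly more than is asked, whereas the paper's route uses the full strength of $q\le 1/64$; either way the lemma as stated follows.
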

\begin{proof}
We shall resort to Bennett's inequality (see, for instance, Exercise 2.5 in~\cite{DL} or Theorem A.1.12 in \cite{AS}) which  for any integer $N$, probability $q$, and $s >0$, gives the bound
\begin{equation}
\label{eq:bennets}
\P\big(\textsc{bin}(N,q) \ge Nq(1+s)\big) \le \exp\big(-Nq\, [(1+s)\ln(1+s)-s]\big)\, .
\end{equation}
In our case $Nq(1+s)=\frac{3}{8}N$. Since $q < {3}/{8}$, we have, as required, that $s >0$.
We bound the exponent in \eqref{eq:bennets} by
$$
\begin{aligned}
Nq\, [(1+s)\ln(1+s)-s]=\tfrac{3}{8}N\ln\big(\tfrac{3}{8q}\big)-Nqs\ge \tfrac{3}{8}N\ln\big(\tfrac{3}{8q}\big)-\tfrac{3}{8}N
\ge \tfrac{3}{8}N\ln\big(\tfrac{1}{8q}\big)\, ,
\end{aligned}$$
to conclude that
$$
\P\big(\textsc{bin}(N, q) \ge \tfrac{3}{8} N\big) \le (8q)^{{3N}/{8}}\le q^{3N/16}\, .
$$
We have used in the last inequality that $q\le{1}/{64}$.
\end{proof}

We are now ready for:
\begin{proof}[Proof of Theorem {\upshape\ref{theor:uniform_aproximation_binomial}}]
To prove the inequality of the theorem we may assume that $r$ is large, say $r\ge 16$. 
For $t \in \mathbb{R}$, write
$$
\Pi(t)=\P\big(\textsc{bin}(r,\tfrac{1}{2})\le t\big)-\P\big(W_r \le t\big)\, .
$$

To bound $\Pi(t)$, we split into two cases.
For  $t \le{3r}/{8}$, we simply bound
\begin{equation}\label{eq:bound for t small}
\begin{aligned}
\Pi(t)\le \P\big(\textsc{bin}(r,\tfrac{1}{2})\le t\big) &\le \P\big(\textsc{bin}(r,\tfrac{1}{2})\le \tfrac{3}{8}r\big)=
\P\big(\textsc{bin}(r,\tfrac{1}{2})-\tfrac{r}{2}\le -\tfrac{r}{8}\big)\\&=
\P\big(\textsc{bin}(r,\tfrac{1}{2})-\tfrac{r}{2}\ge \tfrac{r}{8}\big)\le e^{-{r}/{32}}\, .
\end{aligned}\end{equation}
In the last inequality above we have used again Hoeffding's inequality.

\smallskip

For $t >{3r}/{8}$, we first appeal to \eqref{eq:bound on difference of distributions}
to bound
$$
\Pi(t) \le 1-\prod_{p\ge 3} \P\big(\textsc{bin}(r,1/p)\le t\big) \le 1-\prod_{p\ge 3} \P\big(\textsc{bin}(r,1/p)\le \tfrac{3}{8}r\big)\, .
$$
Split now the product over primes into the product over $p \ge64$ and over $64 > p \ge 3$.
For the first product we have, using Lemma \ref{lemma:bound Bennet on tails of binomials},
\begin{equation}\label{eq:first bound for t large}
\prod_{p\ge 64} \P\big(\textsc{bin}(r,1/p)\le \tfrac{3}{8}r\big)\ge \prod_{p\ge 64} \Big(1-\frac{1}{p^{3r/16}}\Big)\ge \frac{1}{\zeta(3r/16)}\ge 1-2^{1-3 r/16}\, ,
\end{equation}
where we have used that $\zeta(s)\le 1+2^{1-s}$ for $s \ge 3$ (and that $3r/16 \ge 3$).
On the other hand, using Lemma \ref{lemma:bound Hoeffding on tails of binomials}, we may write
\begin{equation}\label{eq:second bound for t large}
\prod_{64>p \ge 3} \P\big(\textsc{bin}(r,1/p)\le \tfrac{3}{8}r\big)\ge \big(1-e^{-r/300}\big)^{17}\ge 1-17 e^{-r/300}\,.
\end{equation}
The proof is finished by taking into account the estimates \eqref{eq:bound for t small}, \eqref{eq:first bound for t large} and~\eqref{eq:second bound for t large}.
\end{proof}

\begin{remark}
{\upshape The argument of the proof of Theorem \ref{theor:uniform_aproximation_binomial} would give that if $q_1={1}/{2} >q_2 > q_3> \cdots >0$ is a sequence of probabilities so that $\sum_{j=1}q_j^\alpha <+\infty$, for some $\alpha >0$, and that if $\{U_j^{(N)}\}_{j\ge 1}$ is a sequence of independent binomial distributions, with parameters $N$ and $q_j$, then
the variable $V_N=\max_{j\ge1} (U^{(N)}_j)$ is asymptotically normal, and in fact, for each $t \in \mathbb{R}$,
$$
\lim_{N \to \infty}\P\Big(\frac{V_N-{N}/{2}}{{\sqrt{N}}/{2}}\le t\Big)=\Phi(t)\, .
$$}
\end{remark}

\begin{remark}
{\upshape  Let $X_1, X_2, \ldots$ be a sequence of independent variables all  following the zeta-distribution $\Q_s$ for some $s >1$: that is, for each integer $n \ge 1$,
$$
\Q_s(X=n)=\frac{1}{n^s\zeta(s)}.
$$
 Define, for $r\ge2$, $U_r=I_r(X_1, \ldots, X_r)$. Then,
 $$
\frac{U_r-{r}/{2^s}}{\sqrt{(1-{1}/{2^s})\, {r}/{2^s}}} \overset{\text{d}}{\longrightarrow} \mathcal{N}(0,1)\,, \quad \text{as} \ \ r \to \infty\, .
$$
This follows form the previous remark or by a slight modification of the proof of Theorem \ref{theor:uniform_aproximation_binomial}; just observe that divisibility by different primes are independent variables under $\Q_s$.}
\end{remark}

\smallskip

\noindent\textsc{Jos\'{e} L. Fern\'{a}ndez:} Departamento de Matem\'{a}ticas, Universidad Aut\'{o}noma de Madrid, 28049-Madrid, Spain.
\texttt{joseluis.fernandez@uam.es}.

\medskip

\noindent\textsc{Pablo Fern\'{a}ndez:} Departamento de Matem\'{a}ticas, Universidad Aut\'{o}noma de Madrid, 28049-Madrid, Spain.
\texttt{pablo.fernandez@uam.es}.

 \renewcommand{\thefootnote}{\fnsymbol{footnote}}
\footnotetext{The research of both authors is partially supported by Fundaci\'{o}n Akusmatika. The second named author is partially supported by the Spanish Ministerio de Ciencia e Innovaci\'{o}n, project no. MTM2011-22851.}
\renewcommand{\thefootnote}{\arabic{footnote}}

\end{document}